\newcommand{\BigO}[1]{\ensuremath{\operatorname{O}\bigl(#1\bigr)}}
\newtheorem{theorem}{Theorem}[section]
\newtheorem{lemma}[theorem]{Lemma}
\newtheorem{corollary}[theorem]{Corollary}
\title{Counting birthday collisions using partitions}
\author{Rob Burns and Jen McKenzie}
\begin{document}
\maketitle
\begin{abstract}
We use partitions to provide some formulae for counting s-collisions and other events in various forms of the Birthday Problem.
\end{abstract}

\section{Introduction}
\label{intro}
The standard Birthday Problem asks for the probability that at least two people in a group have the same birthday. Perhaps the most well known result in this area is that in a group of at least $23$ people the probability of at least two having the same birthday is around $1/2$. In the general form of the problem the number of days $n$ in the year and the size $q$ of the group are treated as variables and the outcomes are studied under various constraints on $n$ and $q$. The history of the problem is unclear. Both Richard von Mises in 1932 and Harold Davenport have been mentioned as initiators of the
problem.

Some of the questions which have been discussed in the context of the Birthday problem are:

For a year having $n$ days, what is the minimum size of a group to ensure that the probability of at least two people in the group having the same birthday is $\frac{1}{2}$? As mentioned earlier, for a year having $365$ days a minimum of $23$ people are needed in order that the probability of two people having the same birthdays is at least $1/2$. For large values of $n$ the size of the group needs to be $\BigO{\sqrt{n}}$ in order for the probability of a common birthday to be $\frac{1}{2}$. See, for example, \cite{Ahmed2000}, \cite{Brink_2012}.

What is the minimum size of a group such that the expected number of common birthdays is at least $1$? For a year containing $365$ days, a group of $28$ or more is needed before the expected number of common birthdays is greater than $1$. In general the group needs to be $\BigO{\sqrt{n}}$ in order for the expected value of a common birthday to be at least $1$.

In a group of $q$ people, what is the probability that everyone in the group shares a birthday with someone else in the group? This is known as the Strong Birthday Problem. In his survey article \cite{DASGUPTA2005377} DasGupta states that for a year having $365$ days, the group having $3064$ members is the smallest such that the probability of everyone sharing a birthday is $\frac{1}{2}$.

What can be said about the distribution of outcomes as the size $q$ of the group and the number $n$ of possible birthdays approach $\infty$? See for example \cite{Arratia:aa}, \cite{Arkhipov:aa}, \cite{DASGUPTA2005377}, \cite{HENZE1998333}.

The problem arises in a number of scenarios and lends itself to many variations. It appears in cryptography in the form of what is called the "Birthday Attack". In this situation, messages are mapped to a hash table and for security reasons it is important to know how many messages need to be hashed before two are found with the same hash value (see e.g. \cite{Su:aa}, \cite{Suzuki_2006},  \cite{Joux_2004},  \cite{Rivest_1997}).

The problem arises in the study of colourings of complete graphs (\cite{Bhattacharya:aa}, \cite{Fadnavis:aa}).

It is also related to the behaviour of certain Markov Chains (\cite{Benjamini:aa}, \cite{Kim:aa}, \cite{Montenegro:2012aa}).

In this paper we will picture the problem in terms of arranging $q$ balls inside $n$ tubes or buckets and counting various types of outcomes.

\section{Terminology}
Suppose we have $q$ numbered balls which are arranged inside $n$ numbered tubes. The tubes have the same width as the diameter of the balls so that when more than one ball is located within the same tube a column of balls forms. We denote the number of arrangements of $q$ balls into $n$ tubes by $T(n, q)$. The order of the balls within each tube is important and is taken into account when counting the number of arrangements.

We may also arrange the balls in buckets instead of tubes. For our purposes a bucket is a tube in which the order of the balls is not important. We denote the number of arrangements of $q$ balls into $n$ buckets by $B(n, q)$.

Whether dealing with tubes or buckets we will generally assume that the balls are numbered and therefore distinguishable. The case of indistinguishable balls, which are called bosons, has been studied. For example, the asymptotic behaviour of bosons as $n$, $q$  approach $\infty$ was studied in \cite{Aaronson:aa} and \cite{Arkhipov:aa}. Formulae obtained in sections \ref{tubesandballs} and \ref{bucketsandballs} can be modified to apply to bosons.

Let $s \in \mathbb{N}$ with $s \geq 2$. We say an $s$-collision has occurred when a tube (or bucket) contains $s$ or more balls. We will be counting the number of $s$-collisions which occur in an arrangement of balls. We will therefore need to define the number of $s$-collisions occurring when a tube contains $r$ balls with $r \geq s$. In the literature two separate definitions have been used to count $s$-collisions (see \cite{Arratia:aa}). Under one definition a tube containing $r \geq s$ balls contributes $r-s+1$ collisions to the count of $s$-collisions. The second definition states that a tube containing $r \geq s$ balls contributes $\binom{r}{s}$ collisions to the count of $s$-collisions in an arrangement of balls. We will use the second definition is this paper but the formulae derived here can be easily altered to accomodate the first definition.

The floor function will be denoted in the usual way by $\lfloor . \rfloor$.

\bigskip

\section{Partitions}
\label{partitiions}

We will denote a general partition of a positive integer $q$ by the letter $\lambda$. $\lambda$ is therefore a set of positive integers $$\{ \lambda_1, \lambda_2, \dots, \lambda_k \}$$ such that $$q = \sum_{i = 1}^{k} \lambda_i$$ and $$q \geq \lambda_1 \geq \lambda_2 \dots \geq \lambda_k \geq 1.$$ Here $k$ is called the size or the number of parts of the partition $\lambda$. It may also be written as  $| \lambda |$. $k$ depends on $\lambda$ but we will not generally make that explicit.

The following lemma will not be used in this paper but is provided to show that the term $\left(q!/ \prod_{i = 1}^{k} i^{ \lambda_i}\right)$ which appears in some of the subsequent formulae is an integer.

\bigskip
\begin{lemma}
\label{partitionlemma}
Let $q \in \mathbb{N}$ and $\lambda$ be a partition of $q$. Then $q!$ is divisible by $\prod_{i = 1}^{k} i^{ \lambda_i}$. In addition $q!$ is divisible by $\prod_{i = 1}^{k} \lambda_i!$.
\end{lemma}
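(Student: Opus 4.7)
The statement splits into two divisibility claims; the second is almost immediate, while the first carries the real content. For $\prod_{i=1}^k \lambda_i! \mid q!$, I would just note that $\sum \lambda_i = q$ and invoke the multinomial coefficient $\binom{q}{\lambda_1,\dots,\lambda_k} = q!/\prod_i \lambda_i!$, which is a non-negative integer because it counts ordered partitions of a $q$-element set into blocks of the prescribed sizes. This disposes of the second assertion in one line.

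For the harder claim $\prod_{i=1}^{k} i^{\lambda_i} \mid q!$, I would work one prime at a time. Fix a prime $p$; by Legendre's formula on the right and additivity of the $p$-adic valuation $v_p$ on the left, the claim reduces to
\begin{equation*}
\sum_{j\geq 1} \left\lfloor \frac{q}{p^j} \right\rfloor \;\geq\; \sum_{i=1}^{k} \lambda_i\, v_p(i) \;=\; \sum_{j\geq 1}\ \sum_{\substack{1\leq i\leq k \\ p^j\,\mid\, i}} \lambda_i,
\end{equation*}
where the last equality uses $v_p(i) = \#\{j \geq 1 : p^j \mid i\}$. It therefore suffices to establish the level-by-level inequality $\lfloor q/p^j\rfloor \geq \lambda_{p^j} + \lambda_{2p^j} + \cdots + \lambda_{mp^j}$ for every $j \geq 1$, where $m := \lfloor k/p^j\rfloor$.

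The one real idea, and the only place the partition structure gets used, is the monotonicity $\lambda_1 \geq \lambda_2 \geq \cdots \geq \lambda_k$. To exploit it I would partition the index range $\{1,2,\dots,mp^j\}$ into $m$ consecutive blocks of length $p^j$; the $t$-th block has largest index $tp^j$, so by monotonicity each of its $p^j$ entries is at least $\lambda_{tp^j}$. Summing over the $m$ blocks yields
\begin{equation*}
p^j \sum_{t=1}^{m} \lambda_{tp^j} \;\leq\; \sum_{i=1}^{mp^j} \lambda_i \;\leq\; \sum_{i=1}^{k} \lambda_i \;=\; q,
\end{equation*}
and dividing by $p^j$ and rounding down the integer left-hand side delivers the required bound. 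The main obstacle I see is not computational but conceptual: recognising that the global divisibility should be broken up prime by prime and then level by level, so that the monotonicity of the partition gets to do exactly one job at each stage.
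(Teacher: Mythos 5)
Your proof of the main claim, $\prod_{i=1}^{k} i^{\lambda_i} \mid q!$, is correct and is essentially the paper's own argument: fix a prime $p$, apply Legendre's formula, rewrite $\sum_i \lambda_i\, v_p(i)$ as a double sum over prime powers $p^j$ and their multiples, and use the monotonicity of $\lambda$ to get $p^j\sum_t \lambda_{t p^j} \leq q$ and hence the level-by-level bound $\sum_t \lambda_{t p^j} \leq \lfloor q/p^j\rfloor$ after flooring the integer left-hand side. Your write-up of the block decomposition is in fact slightly more explicit than the paper's, which simply asserts the inequality from the decreasing property. The only genuine divergence is the second claim, $\prod_{i=1}^{k}\lambda_i! \mid q!$: you dispatch it combinatorially via the integrality of the multinomial coefficient $\binom{q}{\lambda_1,\dots,\lambda_k}$, whereas the paper repeats the $p$-adic valuation computation, using $\sum_i \lfloor \lambda_i/p^j\rfloor \leq \lfloor (\sum_i\lambda_i)/p^j\rfloor$. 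Your route is shorter and gives a counting interpretation for free; the paper's keeps both halves of the lemma under a single uniform method. Both are complete, so there is no gap to repair.
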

\begin{proof}
We use the usual approach of taking an arbitrary prime $p$ and showing that the maximum power of $p$ dividing $q!$ is greater than the maximum power dividing either of $\prod_{i = 1}^{k} i^{ \lambda_i}$ or $\prod_{i = 1}^{k} \lambda_i!$. For an integer $n$ let $ord_p(n)$ denote the maximum power of $p$ dividing $n$. We know that 
$$
ord_p(q!) = \sum_{i \geq 1} \lfloor \frac{q}{p^i} \rfloor .
$$
We have
$$
ord_p \left( \prod_{i = 1}^{k} i^{ \lambda_i} \right) = \sum_{i \geq 1} \sum_{j \geq 1} \lambda_{j*p^i}.
$$
Since the $\{ \lambda_1, \lambda_2, \dots. \lambda_k \}$ forms a decreasing sequence, for fixed $i$ we have
$$
p^i * \sum_{j \geq 1}  \lambda_{j*p^i} \leq \sum_{r \geq 1} \lambda_r = q
$$
and since  $\sum_{j \geq 1}  \lambda_{j*p^i}$ is an integer it follows that
$$
\sum_{j \geq 1}  \lambda_{j*p^i} \leq \lfloor \frac{q}{p^i} \rfloor.
$$
Therefore,
$$
ord_p \left( \prod_{i = 1}^{k} i^{ \lambda_i} \right)  \leq  \sum_{i \geq 1}  \lfloor \frac{q}{p^i} \rfloor = ord_p(q!).
$$
The approach for $\prod_{i = 1}^{k} \lambda_i!$ is the same. We have
$$
ord_p(\prod_{i = 1}^{k} \lambda_i!) = \sum_{i \geq 1} \sum_{j \geq 1} \lfloor \frac{\lambda_i}{p^j} \rfloor
$$
$$
\leq \sum_{j \geq 1} \lfloor \frac{\sum_{i \geq 1} \lambda_i}{p^j} \rfloor
$$
$$
= \sum_{j \geq 1} \lfloor \frac{q}{p^j} \rfloor = ord_p(q).
$$
\end{proof}
\bigskip
The above lemma shows that the tuple $\{ q, \lambda_1, \dots , \lambda_k \}$ satisfies
$$
\frac{(q*n)!}{\prod_{i = 1}^{k} (\lambda_i * n)!} \in \mathbb{N}
$$
for every $n \in \mathbb{N}$. We say that the tuple has an integral factorial ratio. This is an area of current research. See for example recent papers by Soundararajan \cite{Soundararajan:ab}, \cite{Soundararajan:aa}.

\bigskip

\section{A commutative diagram}
\label{commuting}

Each arrangement of $q$ balls in $n$ tubes can be mapped to a partition $\lambda$ of $q$ by letting $\lambda_1$ be the number of tubes holding at least one ball, $\lambda_2$ be the number of tubes holding at least two balls etc. It is clear that $\lambda$ defined in this way satisfies the definition of a partition of $q$. We will call this mapping $\phi_T$. In the same way, each arrangement of $q$ balls in $n$ buckets can be mapped to a partition $q$ by a map which we will call $\phi_B$. Both mappings are many to one. If $q \leq n$ then the mappings are surjective, otherwise the common range of the mappings is the set of partitions $\lambda$ such that $\lambda_1 \leq n$.

Each arrangement of balls in tubes can be mapped to an arrangement of balls in buckets by ignoring the order of the balls in each tube. We will call this mapping $\phi_{TB}$. It is also many to one and surjective.

The mappings $\phi_T$, $\phi_B$ and $\phi_{TB}$ satisfy the identity
\begin{equation}
\label{phicoll}
\phi_T = \phi_B \circ \phi_{TB}.
\end{equation}
\bigskip
This identity represents the fact that the partition associated with an arrangement of balls in tubes is the same partition associated with the arrangement in buckets obtained by ignoring the order of the balls in each tube.
\bigskip

For a set $A$, we denote the number of elements in A by $| A |$.

\bigskip
\begin{lemma}
\label{mapping lemma}
Let $\lambda$ be a partition of $q$. Then we have
\begin{equation}
\label{T-1}
| \phi_T ^{-1} (\lambda) | =  q! * \binom{n}{\lambda_1}*\prod_{i=2}^{k} \binom{\lambda_{i-1}}{\lambda_i}
\end{equation}
and
\begin{equation}
\label{B-1}
| \phi_B ^{-1} (\lambda) | =  \left( q!  / \prod_{i = 1}^{k} i^{ \lambda_i}  \right) * \binom{n}{\lambda_1}*\prod_{i=2}^{k} \binom{\lambda_{i-1}}{\lambda_i}.
\end{equation}
Let $a$ be an arrangement of $q$ balls in $n$ buckets and denote the partition $\phi_B(a)$ by $\lambda$. Then
\begin{equation}
\label{TB-1}
| \phi_{TB} ^{-1} (a) | = \prod_{i = 1}^{k} i^{ \lambda_i}.
\end{equation}
\end{lemma}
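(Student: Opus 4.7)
The plan is to prove the three identities in the order (\ref{T-1}), (\ref{TB-1}), (\ref{B-1}), with the third following from the first two together with the commutative diagram identity (\ref{phicoll}). The underlying combinatorial idea throughout is that the numbers $\lambda_i - \lambda_{i+1}$ (with the convention $\lambda_{k+1}=0$) count the tubes/buckets holding \emph{exactly} $i$ balls, which is the bridge between the ``at least'' definition of $\lambda_i$ used here and the usual counting arguments.

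For (\ref{T-1}), I would count the fibre $\phi_T^{-1}(\lambda)$ in two independent stages. First, decide the shape of the configuration: choose which $\lambda_1$ of the $n$ numbered tubes are non-empty in $\binom{n}{\lambda_1}$ ways, then choose which $\lambda_2$ of those hold at least two balls in $\binom{\lambda_1}{\lambda_2}$ ways, and so on down to $\binom{\lambda_{k-1}}{\lambda_k}$. After these $k$ selections every tube has a prescribed height. Second, given the heights, view the non-empty tubes as a disjoint union of ordered stacks of slots totalling $q$ slots; placing the $q$ numbered balls into these slots amounts to a permutation, contributing the factor $q!$. Multiplying the two stages gives (\ref{T-1}).

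For (\ref{TB-1}), fix a bucket arrangement $a$ with $\phi_B(a)=\lambda$. A tube arrangement in $\phi_{TB}^{-1}(a)$ is obtained by choosing, independently in each bucket, a linear order of its contents. If bucket $j$ holds $h_j$ balls, this contributes $h_j!$ orderings, so $|\phi_{TB}^{-1}(a)| = \prod_j h_j!$. The number of buckets with exactly $i$ balls is $\lambda_i-\lambda_{i+1}$, hence
\begin{equation*}
\prod_j h_j! \;=\; \prod_{i=1}^{k}(i!)^{\lambda_i-\lambda_{i+1}} \;=\; \prod_{i=1}^{k}\prod_{j=1}^{i} j^{\lambda_i-\lambda_{i+1}} \;=\; \prod_{j=1}^{k} j^{\sum_{i\geq j}(\lambda_i-\lambda_{i+1})} \;=\; \prod_{j=1}^{k} j^{\lambda_j},
\end{equation*}
where the last equality is a telescoping sum. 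This is the identity (\ref{TB-1}), and it is the only mildly technical step; it will be the main obstacle in the sense that one has to pass carefully between the ``at least $i$'' and ``exactly $i$'' viewpoints before the product simplifies.

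Finally, (\ref{phicoll}) gives $\phi_T^{-1}(\lambda) = \bigsqcup_{a \in \phi_B^{-1}(\lambda)} \phi_{TB}^{-1}(a)$ as a disjoint union, since $\phi_{TB}$ is a function. By (\ref{TB-1}) each summand has the same cardinality $\prod_{i=1}^{k} i^{\lambda_i}$, independent of $a$, so $|\phi_T^{-1}(\lambda)| = |\phi_B^{-1}(\lambda)|\cdot \prod_{i=1}^{k} i^{\lambda_i}$; solving for $|\phi_B^{-1}(\lambda)|$ and substituting (\ref{T-1}) yields (\ref{B-1}). This also re-proves, combinatorially, that the rational expression in (\ref{B-1}) is an integer, a fact established independently by Lemma~\ref{partitionlemma}.
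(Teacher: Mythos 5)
Your proposal is correct and follows essentially the same route as the paper: the same two-stage count (tube pattern times $q!$ ball placements) for (\ref{T-1}), the product of factorials over buckets for (\ref{TB-1}), and the commutative diagram (\ref{phicoll}) to deduce (\ref{B-1}). The only cosmetic difference is in (\ref{TB-1}), where you pass through the ``exactly $i$ balls'' counts $\lambda_i-\lambda_{i+1}$ and telescope, whereas the paper rewrites $\prod_j x_j!$ directly as $\prod_{i\geq 1} i^{|\{j: x_j\geq i\}|}$; both are valid organizations of the same computation.
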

\begin{proof}
For a fixed partition $\lambda$ of $q$ there are $\binom{n}{\lambda_1}$ ways of choosing the $\lambda_1$ tubes containing at least one ball, $\binom{\lambda_1}{\lambda_2}$ ways of choosing $\lambda_2$ tubes containing at least two balls etc. Therefore the number of ways that the tubes can be chosen so that the arrangement matches $\lambda$ is 
$$
\binom{n}{\lambda_1}*\prod_{i=2}^{k} \binom{\lambda_{i-1}}{\lambda_i}.
$$
For each choice of tube pattern there are $q!$ ways of arranging the $q$ balls in the tubes so that the balls match the pattern. Equation (\ref{T-1}) follows.

Let $a$ be an arrangement of $q$ balls in $n$ buckets and $\lambda = \phi_B(a)$. Denote the number of balls in the $j$-th bucket by $x_j$. By definition, for each $i \ \geq 1$,
$$
\lambda_i = | \{ j : x_j \geq i \} | .
$$
Then
$$
| \phi_{TB} ^{-1} (a) |  = \prod_{j = 1}^{n} x_j ! = \prod_{i \geq 1} i^{ | \{ j: x_j \geq i \} |} = \prod_{i = 1}^{k} i^{ \lambda_i}.
$$

Equation (\ref{B-1}) follows from equations (\ref{T-1}) and (\ref{TB-1}) and the identity (\ref{phicoll}).
\end{proof}

\bigskip

\section{Tubes and balls}
\label{tubesandballs}

In this section we present a formula for the number of arrangements of $q$ balls in $n$ tubes. 

\bigskip

\begin{theorem}
\label{tubethm1}
The number $T(n, q)$ of arrangements of  $q$ numbered balls in $n$ numbered tubes is given by the equation
\begin{equation}
\label{Tnq}
T(n, q) = q!*\sum_{\lambda : \lambda_1 \leq n} \binom{n}{\lambda_1}*\prod_{i=2}^{k} \binom{\lambda_{i-1}}{\lambda_i}.
\end{equation}
where the sum is over all partitions $\lambda$ of $q$ such that $\lambda_1 \leq n$.
\end{theorem}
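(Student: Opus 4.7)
The plan is to obtain Theorem 5.1 as an almost immediate corollary of Lemma 4.1, by partitioning the set of all arrangements of $q$ balls in $n$ tubes according to the partition of $q$ that they determine via the map $\phi_T$.

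First I would observe that every arrangement $a$ of $q$ balls in $n$ tubes sends, under $\phi_T$, to some partition $\lambda$ of $q$, and that since $\lambda_1$ is by definition the number of tubes holding at least one ball, we necessarily have $\lambda_1 \leq n$. Conversely, every partition $\lambda$ of $q$ with $\lambda_1 \leq n$ arises this way (its preimage is nonempty, as already noted in Section 4). Hence the family $\{ \phi_T^{-1}(\lambda) : \lambda \vdash q,\ \lambda_1 \leq n \}$ is a partition of the set of all arrangements of $q$ balls in $n$ tubes.

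From this disjoint decomposition I get
\begin{equation*}
T(n,q) \;=\; \sum_{\lambda : \lambda_1 \leq n} |\phi_T^{-1}(\lambda)|,
\end{equation*}
and substituting equation (\ref{T-1}) from Lemma \ref{mapping lemma} for $|\phi_T^{-1}(\lambda)|$, then pulling the common factor $q!$ outside the sum, yields the claimed identity (\ref{Tnq}).

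There is essentially no obstacle here; the content of the theorem has already been extracted in Lemma \ref{mapping lemma}, and the only things to verify carefully in writing up the proof are that the index set is exactly the partitions with $\lambda_1 \leq n$ (so no spurious arrangements are counted and none are omitted) and that the preimages for distinct partitions are disjoint, which is automatic because $\phi_T$ is a well-defined function.
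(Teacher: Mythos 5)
Your proposal is correct and matches the paper's own proof: both partition the set of arrangements into the fibres $\phi_T^{-1}(\lambda)$ over partitions $\lambda$ with $\lambda_1 \leq n$ and sum the fibre sizes given by equation (\ref{T-1}) of Lemma \ref{mapping lemma}. Your write-up is in fact slightly more careful than the paper's, since it explicitly notes that the fibres are disjoint and that the index set is exactly the partitions with $\lambda_1 \leq n$.
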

\begin{proof}
Each arrangement of balls in the tubes corresponds to a partition $\lambda$ of $q$ via the mapping $\phi_T$. The number of arrangements mapped to the same $\lambda$ is given by equation (\ref{T-1}). In order to count all possible arrangements we take the sum of $ |\phi_T^{-1}(\lambda) |$ over all partitions of $q$ resulting in equation (\ref{Tnq}).
\end{proof}

\bigskip

Theorem \ref{tubethm1} can be used to obtain an expression for the number of arrangements having no $s$-collision by restricting the sum to partitions of $q$ in which $| \lambda | < s$. Fairly simple formulae result when $s = 2, 3$.

\bigskip

\begin{corollary}
\label{tubecor1}
The number of arrangements of  $q$ numbered balls in $n$ numbered tubes in which there are no $2$-collisions is 
$$
\frac{n!}{(n-q)!}
$$
\end{corollary}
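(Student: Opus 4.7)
The plan is to specialize Theorem \ref{tubethm1} to the situation where no tube contains two or more balls, since by the paper's definition this is exactly what "no $2$-collision" means. As noted in the paragraph following Theorem \ref{tubethm1}, arrangements with no $s$-collision correspond under $\phi_T$ to partitions with $|\lambda| < s$, so for $s = 2$ I need to restrict the sum in (\ref{Tnq}) to partitions with $k = |\lambda| = 1$.

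First I would identify the relevant partitions explicitly. Recall from Section \ref{commuting} that $\lambda_i$ counts the number of tubes holding at least $i$ balls. The condition that no tube holds two or more balls is therefore $\lambda_2 = 0$, which under the convention that a partition is truncated at its last nonzero part forces $k = 1$ and $\lambda_1 = q$. There is thus a single partition contributing to the restricted sum, and the restriction $\lambda_1 \leq n$ in (\ref{Tnq}) becomes $q \leq n$ (when $q > n$ both sides vanish, with $n!/(n-q)!$ interpreted as the falling factorial $n(n-1)\cdots(n-q+1)$).

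Second I would plug this unique partition into (\ref{Tnq}). Because $k = 1$, the product $\prod_{i=2}^{k}\binom{\lambda_{i-1}}{\lambda_i}$ is empty and equals $1$, so the only surviving factor is $q!\cdot\binom{n}{\lambda_1} = q!\cdot\binom{n}{q}$. Simplifying gives
\[
q!\cdot\binom{n}{q} \;=\; q!\cdot\frac{n!}{q!(n-q)!} \;=\; \frac{n!}{(n-q)!},
\]
which is the claimed formula.

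There is no real obstacle here: the argument is essentially a one-line specialization of Theorem \ref{tubethm1}. The only thing worth being careful about is the bookkeeping of the empty product and the interpretation of the case $q > n$, neither of which is substantive.
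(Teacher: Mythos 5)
Your proof is correct and follows exactly the paper's route: both restrict the sum in Theorem \ref{tubethm1} to the unique partition $\lambda = \{q\}$ (forced by $\lambda_2 = 0$) and simplify $q!\binom{n}{q}$ to $n!/(n-q)!$. Your version merely spells out the empty-product and $q > n$ bookkeeping that the paper leaves implicit.
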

\begin{proof}
In these arrangements all the balls lie on the bottom level of the tubes so the corresponding partition of $q$ must have $\lambda_2 = 0$. The only such partition is the trivial one given by $\lambda = \{ q \}$. Equation (\ref{Tnq}) then reduces to the required formula.
\end{proof}

\bigskip

\begin{corollary}
\label{tubecor2}
The number of arrangements of  $q$ numbered balls in $n$ numbered tubes in which there are no $3$-collisions is 
$$
q! * \sum_{r=0}^{r=\lfloor \frac{q}{2} \rfloor } \binom{n}{q-r} * \binom{q-r}{r}
$$
\end{corollary}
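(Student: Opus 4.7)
The plan is to specialize Theorem \ref{tubethm1} to partitions corresponding to arrangements with no 3-collisions. An arrangement avoids a 3-collision exactly when every tube contains at most 2 balls, and under the correspondence $\phi_T$ the height-$i$ statistic is precisely $\lambda_i$, so the admissible partitions are those with $\lambda_i = 0$ for all $i \geq 3$. Such a partition has the form $\lambda = \{\lambda_1, \lambda_2\}$, possibly with $\lambda_2 = 0$ (the trivial single-part case already used in Corollary \ref{tubecor1}).

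Next I would re-index the resulting sum by $r := \lambda_2$, the number of height-2 tubes. Counting balls: the $r$ tubes of height 2 contribute $2r$ balls and the remaining $\lambda_1 - r$ occupied tubes each contribute 1 ball, so $\lambda_1 + r = q$, giving $\lambda_1 = q - r$. The partition monotonicity $\lambda_1 \geq \lambda_2$ becomes $q - r \geq r$, which is exactly $r \leq \lfloor q/2 \rfloor$. The tube-cap constraint $\lambda_1 \leq n$ translates to $q - r \leq n$, which is automatically enforced by $\binom{n}{q-r}$ vanishing for $q - r > n$, so the upper limit of summation can be kept as $\lfloor q/2 \rfloor$ without further restriction.

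Finally I would substitute into equation (\ref{Tnq}). For each admissible $\lambda$ the inner product $\prod_{i=2}^{k}\binom{\lambda_{i-1}}{\lambda_i}$ collapses to the single factor $\binom{\lambda_1}{\lambda_2} = \binom{q-r}{r}$, so the summand becomes $\binom{n}{q-r}\binom{q-r}{r}$; multiplying by the overall $q!$ gives the claimed formula. There is no genuine obstacle here — the argument is just a careful reparameterization of Theorem \ref{tubethm1} — but the one point that wants a sentence of explanation is why both the trivial partition $\{q\}$ (corresponding to $r = 0$) and the "full" partition $\{q/2, q/2\}$ for even $q$ fit into the same indexing, which is precisely what the bounds $0 \leq r \leq \lfloor q/2 \rfloor$ capture.
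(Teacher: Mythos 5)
Your proposal is correct and follows essentially the same route as the paper: restrict the sum in Theorem \ref{tubethm1} to partitions with $|\lambda| \leq 2$, parameterize them as $\{q-r, r\}$ for $0 \leq r \leq \lfloor q/2 \rfloor$, and read off the summand. Your extra remarks on the vanishing of $\binom{n}{q-r}$ when $q-r > n$ and on the uniform treatment of the $r=0$ case are sound elaborations of details the paper leaves implicit.
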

\begin{proof}
In these arrangements all the balls lie on the bottom two levels of the tubes. We therefore have $| \lambda | \leq 2$ for the corresponding partitions of $q$. Partitions satisfying this constraint are given by
$\lambda\_0 = \{ q \}$ and $\lambda\_r = \{ q-r, r \}$ for $r \in \{ 1, 2, ... , \lfloor \frac{q}{2} \rfloor \}$. The corollary follows from equation (\ref{Tnq}).
\end{proof}

\bigskip
Note that when $q \geq 2n + 1$ all arrangements have at least one $3$-collision. The expression in Corollary \ref{tubecor2} still makes sense and sums to $0$ when $q \geq 2n + 1$ taking into account the convention that $\binom{x}{y} = 0$ when $y > x$.

\bigskip

\begin{theorem}
\label{tubethm2}
Let $s \geq 2$. The total number $C_T(n, q, s)$ of $s$-collisions in all arrangements of $q$ numbered balls in $n$ numbered tubes is equal to
\begin{equation}
\label{Cnqs}
q!*\sum_{\lambda : \lambda_1 \leq n : | \lambda | \geq s} \binom{n}{\lambda_1}* \left( \prod_{i=2}^{k} \binom{\lambda_{i-1}}{\lambda_i} \right) * \left( \lambda_{k}*\binom{k}{s} + \sum_{i=s}^{k-1} \, ( \lambda_{i} - \lambda_{i+1} )*\binom{i}{s} \right)
\end{equation}
where the sum is over all partitions $\lambda$ of $q$ such that $\lambda_1 \leq n$ and $| \lambda | \geq s$.
\end{theorem}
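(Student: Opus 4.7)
The plan is to compute the total number of $s$-collisions by grouping arrangements according to their associated partition under $\phi_T$, and then showing that the $s$-collision count is constant on each fibre $\phi_T^{-1}(\lambda)$, depending only on $\lambda$. This reduces the problem to summing (fibre size) $\times$ ($s$-collisions determined by $\lambda$) over all admissible partitions.

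First I would translate the partition data back into tube-height data. Given a partition $\lambda = \{\lambda_1,\ldots,\lambda_k\}$ of $q$ with $\lambda_1 \leq n$, recall that $\lambda_i$ counts the number of tubes holding at least $i$ balls. Adopting the convention $\lambda_{k+1} = 0$, the number of tubes containing \emph{exactly} $i$ balls is $\lambda_i - \lambda_{i+1}$ for $i = 1,\ldots,k-1$, and exactly $\lambda_k$ tubes contain $k$ balls. This partition of the tubes by height is the same for every arrangement in $\phi_T^{-1}(\lambda)$, so the $s$-collision count is a function of $\lambda$ alone.

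Next, applying the convention $\binom{r}{s}=0$ for $r<s$ and the chosen definition from the terminology section, a tube with $i$ balls contributes $\binom{i}{s}$ to the $s$-collision count. Summing over all tubes in an arrangement with partition $\lambda$ gives
\begin{equation*}
\lambda_{k} \binom{k}{s} + \sum_{i=1}^{k-1} (\lambda_i - \lambda_{i+1})\binom{i}{s}
= \lambda_{k}\binom{k}{s} + \sum_{i=s}^{k-1}(\lambda_i - \lambda_{i+1})\binom{i}{s},
\end{equation*}
where the lower limit is shifted to $i=s$ because $\binom{i}{s}=0$ for $i<s$. Moreover partitions with $k = |\lambda| < s$ contribute nothing, so the outer sum may be restricted to $|\lambda| \geq s$.

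Finally I would apply Lemma \ref{mapping lemma}, equation (\ref{T-1}), which gives $|\phi_T^{-1}(\lambda)| = q!\binom{n}{\lambda_1}\prod_{i=2}^{k}\binom{\lambda_{i-1}}{\lambda_i}$, and multiply by the collision count computed above. Summing over all partitions $\lambda$ of $q$ with $\lambda_1 \leq n$ and $|\lambda| \geq s$ yields precisely equation (\ref{Cnqs}). The only mildly subtle point, and hence the main thing to be careful about, is the telescoping bookkeeping that converts the counts $\lambda_i$ (tubes with at least $i$ balls) into the exact-height counts $\lambda_i - \lambda_{i+1}$ and handles the top level $\lambda_k$ separately; once this is done the theorem follows immediately from Lemma \ref{mapping lemma}.
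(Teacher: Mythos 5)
Your proposal is correct and follows essentially the same route as the paper: group arrangements by their partition under $\phi_T$, observe that the $s$-collision count depends only on $\lambda$ via the exact-height counts $\lambda_i - \lambda_{i+1}$ (with $\lambda_k$ tubes at the top level), and multiply by the fibre size from equation (\ref{T-1}) before summing over admissible partitions. No gaps.
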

\begin{proof}
Any arrangement corresponding to a partition of $q$ with $| \lambda | < s$ has no $s$-collisions as all balls lie below the $s$-th level of the tubes. We therefore only need to consider partitions with $| \lambda | \geq s$.  We begin with equation (\ref{Tnq}). Each term in the sum in equation (\ref{Tnq}) is the number of arrangements corresponding to a particular partition $\lambda$. The number of $s$-collisons is the same for each of arrangement having the same $\lambda$. We need to calculate the number of $s$-collisions occurring for each of these partitions. As mentioned earlier, a tube containing $r \geq s$ balls contributes $\binom{r}{s}$ $s$-collisions to the count. For the partition $\lambda$ with $k =: | \lambda | \geq s$ there are $\lambda_k$ tubes containing exactly $k$ balls, $\lambda_{k-1} - \lambda_k$ tubes containing exactly $k-1$ balls, $\dots$, $\lambda_s - \lambda_{s+1}$ tubes containing exactly $s$ balls. Therefore, each $\lambda$ with $| \lambda | \geq s$ contributes
$$
\lambda_{k}*\binom{k}{s} + \sum_{i=s}^{k-1} \, ( \lambda_{i} - \lambda_{i+1} )*\binom{i}{s}
$$
$s$-collisions to the total. Combining this with equation (\ref{Tnq}) yields equation (\ref{Cnqs}).
\end{proof}

\bigskip

\section{Buckets and balls}
\label{bucketsandballs}

In this section we replace the tubes by buckets. The results from section \ref{tubesandballs} can be used with an appropriate adjustment to take into account that the balls are not ordered within each bucket. A number of closed form expressions have been published for the number of arrangements of balls in buckets satisfying various properties. For example, McKinney (\cite{Mckinney_1966}) provided an expression for the number of arrangements in which there is no $s$-collision. Brink (\cite{Brink_2012} ) provided an exact formula for the least value of $q$ (in terms of $n$) such that the number of arrangements  containing a $2$-collision is at least a half of all arrangements. 

The number of arrangements containing an $s$-collision can also be calculated using a recursive formula. Such a formula was provided by Suzuki, Tonien, Kurosawa, and Toyota in the paper \cite{Suzuki_2006}. 

In this section we will use partitions to construct formulae for various Birthday events.

\bigskip

\begin{theorem}
\label{bucketthm1}
The number $B(n, q)$ of arrangements of  $q$ numbered balls in $n$ numbered buckets is given by the equation
\begin{equation}
\label{Bnq}
B(n, q) = \sum_{\lambda : \lambda_1 \leq n} \binom{n}{\lambda_1}*\left( \prod_{i=2}^{k} \binom{\lambda_{i-1}}{\lambda_i} \right) * \left(q!/ \prod_{i = 1}^{k} i^{ \lambda_i}\right)
\end{equation}
where the sum is over all partitions $\lambda$ of $q$ such that $\lambda_1 \leq n$.
\end{theorem}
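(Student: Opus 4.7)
The plan is to follow the same template as the proof of Theorem~\ref{tubethm1}, replacing $\phi_T$ by $\phi_B$. First I would observe that every arrangement of $q$ balls in $n$ buckets determines a unique partition $\lambda = \phi_B(a)$ of $q$, and that the image of $\phi_B$ consists precisely of those partitions satisfying $\lambda_1 \leq n$ (since $\lambda_1$ is the number of nonempty buckets, which cannot exceed $n$). Consequently $\phi_B$ induces a disjoint decomposition of the set of all bucket-arrangements into fibers indexed by these partitions, so
$$
B(n, q) = \sum_{\lambda : \lambda_1 \leq n} | \phi_B^{-1}(\lambda) |.
$$

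Next I would substitute equation~(\ref{B-1}) from Lemma~\ref{mapping lemma} for each fiber size, yielding exactly the right-hand side of (\ref{Bnq}). The correction factor $1/\prod_{i=1}^{k} i^{\lambda_i}$ that distinguishes the bucket formula from the tube formula has already been accounted for inside Lemma~\ref{mapping lemma}, where it arises via the identity $\phi_T = \phi_B \circ \phi_{TB}$ from (\ref{phicoll}) combined with the count $|\phi_{TB}^{-1}(a)| = \prod_{i=1}^{k} i^{\lambda_i}$ from (\ref{TB-1}).

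Since Lemma~\ref{mapping lemma} does all the heavy lifting, there is essentially no new combinatorial obstacle. The only thing requiring care is the bookkeeping of the index set of the sum: I would briefly note that the fibers are disjoint (as $\phi_B$ is a function) and that their union exhausts the set of arrangements (every arrangement yields some partition with $\lambda_1 \leq n$), so neither double counting nor omissions occur. With that verification, the theorem follows immediately from Lemma~\ref{mapping lemma}.
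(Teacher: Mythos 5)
Your proof is correct and follows essentially the same route as the paper: the paper deduces the theorem from Theorem~\ref{tubethm1} together with equation~(\ref{TB-1}), while you sum the fiber sizes $|\phi_B^{-1}(\lambda)|$ from equation~(\ref{B-1}) directly, and since (\ref{B-1}) is itself obtained from (\ref{T-1}) and (\ref{TB-1}) via (\ref{phicoll}), the two arguments are the same decomposition by partition fibers. No gaps.
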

\begin{proof}
This follows from Theorem \ref{tubethm1} and equation (\ref{TB-1}).
\end{proof}

\bigskip

Since we know that the number of arrangements of $q$ balls in $n$ buckets is $n^q$ ,we have
\begin{equation}
\label{nq}
n^q = \sum_{\lambda : \lambda_1 \leq n} \binom{n}{\lambda_1}*\left( \prod_{i=2}^{k} \binom{\lambda_{i-1}}{\lambda_i} \right) * \left(q!/ \prod_{i = 1}^{k} i^{ \lambda_i}\right).
\end{equation}
When $n = 2$, the relevant partitions of $q$ in equation (\ref{nq})  are of the form
$$
\{ 2, 2, \dots , 2, 1, 1, \dots, 1 \}.
$$
Some algebra then produces the well known formula
$$
2^q = \sum_{i = 0}^{q} \binom{q}{i}.
$$

\bigskip

\begin{corollary}
\label{bucketcor1}
The number of arrangements of  $q$ numbered balls in $n$ numbered buckets in which there are no $2$-collisions is 
$$
\frac{n!}{(n-q)!}
$$
\end{corollary}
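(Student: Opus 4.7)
The plan is to restrict the sum in Theorem~\ref{bucketthm1} to the partitions corresponding to arrangements with no $2$-collision, in exact parallel with the proof of Corollary~\ref{tubecor1}. An arrangement of balls in buckets has no $2$-collision precisely when every occupied bucket contains exactly one ball. Under the map $\phi_B$, such arrangements correspond to partitions $\lambda$ of $q$ with $\lambda_2 = 0$, i.e. $|\lambda| = 1$. The only such partition is the trivial one $\lambda = \{q\}$, which further requires $\lambda_1 = q \leq n$ (if $q > n$ the sum is empty and the formula returns $0$ via the convention $\binom{n}{q} = 0$).

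Plugging $\lambda = \{q\}$ (so $k = 1$, $\lambda_1 = q$) into equation~(\ref{Bnq}), the product $\prod_{i=2}^{k} \binom{\lambda_{i-1}}{\lambda_i}$ is empty and equals $1$, while $\prod_{i=1}^{k} i^{\lambda_i} = 1^{q} = 1$. The sole surviving term becomes $\binom{n}{q} \cdot q! = n!/(n-q)!$, which is the required formula.

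There is essentially no obstacle here: the only thing to verify is that the correction factor $\left(q!/\prod_{i=1}^{k} i^{\lambda_i}\right)$ which distinguishes (\ref{Bnq}) from (\ref{Tnq}) collapses to $q!$ when $\lambda = \{q\}$, so the bucket count coincides with the tube count obtained in Corollary~\ref{tubecor1}. This is conceptually expected: when each occupied bucket holds at most one ball there is no internal ordering of balls to forget, so $\phi_{TB}$ restricts to a bijection on arrangements with no $2$-collision, and both models give the same answer.
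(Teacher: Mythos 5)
Your proof is correct and follows essentially the same route as the paper, which simply notes that the argument of Corollary~\ref{tubecor1} carries over: the only admissible partition is $\lambda = \{q\}$, for which the bucket correction factor $q!/\prod_{i=1}^{k} i^{\lambda_i}$ reduces to $q!$ because $\prod_{i=1}^{k} i^{\lambda_i} = 1^q = 1$. Your added remark that $\phi_{TB}$ is a bijection on collision-free arrangements is a nice conceptual justification but not a departure from the paper's approach.
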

\begin{proof}
The proof is the same as for corollary \ref{tubecor1}.
\end{proof}

\bigskip

The following corollary appears as equation (4) in DasGupta's survey article \cite{DASGUPTA2005377}.

\bigskip

\begin{corollary}
\label{bucketcor2}
The number of arrangements of  $q$ numbered balls in $n$ numbered buckets in which there are no $3$-collisions is 
$$
\sum_{r=0}^{r=\lfloor \frac{q}{2} \rfloor } \binom{n}{q-r} * \binom{q-r}{r} * \left( \frac{q!}{2^r} \right)
$$
\end{corollary}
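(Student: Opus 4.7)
The plan is to mirror the proof of Corollary \ref{tubecor2}, but start from the bucket formula (\ref{Bnq}) in Theorem \ref{bucketthm1} instead of the tube formula (\ref{Tnq}). The condition ``no $3$-collisions'' depends only on the number of balls in each bucket, not on their order within a bucket, so the restriction on the partition is the same as in the tube case: every bucket holds at most two balls, which is equivalent to $|\lambda|\leq 2$, i.e. $\lambda_3 = 0$.

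The partitions of $q$ with $|\lambda|\leq 2$ are exactly the ones listed in Corollary \ref{tubecor2}: the trivial partition $\lambda = \{q\}$ (with $k=1$) and the two-part partitions $\lambda = \{q-r, r\}$ for $r = 1, 2, \ldots, \lfloor q/2 \rfloor$ (with $k=2$). For each of these I would read off the factors appearing in (\ref{Bnq}): the term $\binom{n}{\lambda_1}\prod_{i=2}^{k}\binom{\lambda_{i-1}}{\lambda_i}$ reduces to $\binom{n}{q-r}\binom{q-r}{r}$ (interpreting the empty product as $1$ when $r=0$), while the new ingredient relative to the tube case is the factor $q!/\prod_{i=1}^{k} i^{\lambda_i}$. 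Evaluating this factor on the partitions in question is the heart of the calculation: for $\lambda = \{q-r, r\}$ it equals $q!/(1^{q-r} \cdot 2^r) = q!/2^r$, which also covers the trivial partition as the $r=0$ case.

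Summing these contributions over $r = 0, 1, \ldots, \lfloor q/2 \rfloor$ yields exactly
$$
\sum_{r=0}^{\lfloor q/2 \rfloor} \binom{n}{q-r}\binom{q-r}{r}\frac{q!}{2^r},
$$
as claimed. There is no real obstacle here beyond bookkeeping; the only point that needs a brief comment is that the restriction $\lambda_1 \leq n$ from (\ref{Bnq}) is automatic for partitions contributing nonzero terms, since $\binom{n}{q-r}$ vanishes whenever $q-r > n$, so the sum range $0 \leq r \leq \lfloor q/2\rfloor$ needs no further adjustment.
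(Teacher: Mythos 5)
Your proposal is correct and follows essentially the same route as the paper: restrict the sum in equation (\ref{Bnq}) to the partitions $\{q\}$ and $\{q-r,r\}$ with $|\lambda|\leq 2$, and evaluate the extra factor $q!/\prod_{i=1}^{k} i^{\lambda_i}$ as $q!/2^r$. The remark that $\binom{n}{q-r}$ vanishes when $q-r>n$ is a small but welcome addition the paper leaves implicit.
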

\begin{proof}
In these arrangements each bucket contains at most $2$ balls. We therefore have $| \lambda | \leq 2$ for the corresponding partitions of $q$. Partitions satisfying this constraint are given by
$\lambda\_0 = \{ q \}$ and $\lambda\_r = \{ q-r, r \}$ for $r \in \{ 1, 2, ... , \lfloor \frac{q}{2} \rfloor \}$. For the partition $\lambda\_r$ we have 
$$
\prod_{i = 1}^{k} i^{ \lambda\_r_i} = 2^r.
$$
The corollary follows from equation (\ref{Bnq}).
\end{proof}

\bigskip

\begin{theorem}
\label{bucketthm2l}
Let $s \geq 2$. The total number $C_B(n, q, s)$ of $s$-collisions in all arrangements of $q$ numbered balls in $n$ numbered buckets is given by
\begin{equation}
\label{Bnqs}
\sum_{\lambda : \lambda_1 \leq n : | \lambda | \geq s} \binom{n}{\lambda_1}*\left( \prod_{i=2}^{k} \binom{\lambda_{i-1}}{\lambda_i} \right) * \left( \lambda_{k}*\binom{k}{s} + \sum_{i=s}^{k-1} \, ( \lambda_{i} - \lambda_{i+1} )*\binom{i}{s} \right) *\left(q!/ \prod_{i = 1}^{k} i^{ \lambda_i} \right)
\end{equation}
where the sum is over all partitions $\lambda$ of $q$ such that $\lambda_1 \leq n$ and $| \lambda | \geq s$.
\end{theorem}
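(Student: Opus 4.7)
The plan is to mirror the argument of Theorem \ref{tubethm2}, replacing the tube fiber count (\ref{T-1}) with the bucket fiber count (\ref{B-1}). The conceptual point driving the proof is that the number of $s$-collisions in an arrangement is a function only of the multiset of bucket occupancies, which is exactly the data recorded by the partition $\phi_B(a) = \lambda$. Hence every $a \in \phi_B^{-1}(\lambda)$ contributes the same collision count, and I can aggregate these contributions partition by partition.

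First I would note that any arrangement with $| \lambda | < s$ has every column of height less than $s$ and therefore contributes zero $s$-collisions, so the outer sum may be restricted to partitions with $| \lambda | \geq s$ (and, as always, $\lambda_1 \leq n$). Next, I would compute the common per-arrangement collision count associated with a fixed $\lambda$ of size $k \geq s$. Exactly as in the proof of Theorem \ref{tubethm2}, the partition records $\lambda_k$ buckets containing exactly $k$ balls, $\lambda_{k-1} - \lambda_k$ buckets containing exactly $k-1$ balls, and so on down to $\lambda_s - \lambda_{s+1}$ buckets of exactly $s$ balls; each bucket of height $r \geq s$ contributes $\binom{r}{s}$ collisions, giving the bracketed factor
$$ \lambda_{k} \binom{k}{s} + \sum_{i=s}^{k-1} (\lambda_i - \lambda_{i+1}) \binom{i}{s}. $$
Multiplying this factor by $|\phi_B^{-1}(\lambda)|$ from (\ref{B-1}) and summing over all admissible $\lambda$ yields (\ref{Bnqs}).

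An equivalent route, which makes the relationship with Theorem \ref{tubethm2} manifest, is to start from (\ref{Cnqs}) and divide the $\lambda$-term by $\prod_{i=1}^{k} i^{\lambda_i}$. This is justified by the identity (\ref{phicoll}): each bucket arrangement with image $\lambda$ sits under $\prod_{i=1}^{k} i^{\lambda_i}$ tube arrangements via $\phi_{TB}^{-1}$, all of which share the same column-height multiset and hence the same $s$-collision count. Dividing (\ref{Cnqs}) term-by-term by this factor turns the prefactor $q!$ into $q! / \prod_{i=1}^{k} i^{\lambda_i}$ and reproduces (\ref{Bnqs}).

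There is no real obstacle; the argument is a bookkeeping exercise once Lemma \ref{mapping lemma} is in hand. The one point that deserves an explicit sentence is the invariance of the $s$-collision count along each fiber of $\phi_{TB}$, which follows immediately from the definition of $s$-collision as a function of bucket occupancies only, independent of the internal ordering of the balls.
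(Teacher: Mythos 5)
Your proposal is correct and takes essentially the same route as the paper, whose proof is the one-line observation that the result follows from Theorem \ref{tubethm2} and equation (\ref{TB-1}) --- precisely your second argument of dividing each $\lambda$-term of (\ref{Cnqs}) by $\prod_{i=1}^{k} i^{\lambda_i}$. Your explicit remark that the $s$-collision count is constant along each fiber of $\phi_{TB}$ is a useful detail the paper leaves implicit.
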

\begin{proof}
This follows from Theorem \ref{tubethm2} and equation (\ref{TB-1}).
\end{proof}

\bigskip

Subsets of arrangements can be counted using equation (\ref{Bnq}) by restricting the choice of partitions in the sum. For example, to count the number of arrangements in which at least $r$ buckets have an $s$-collision, the sum in  (\ref{Bnq}) should only include partitions $\lambda$ such that $\lambda_s \geq r$. The Strong Birthday problem asks for the number of arrangements in which no bucket contains only one ball. This number is obtained from (\ref{Bnq}) by restricting to partitions $\lambda$ such that $\lambda_1 = \lambda_2$.

\bigskip

\bibliographystyle{plain}
\begin{small}
\bibliography{Birthday}

\begin{thebibliography}{10}

\bibitem{Aaronson:aa}
Scott Aaronson and Alex Arkhipov.
\newblock The computational complexity of linear optics.
\newblock {\em IN PROCEEDINGS OF STOC 2011}, 2011.

\bibitem{Ahmed2000}
S.E. Ahmed and R.J. McIntosh.
\newblock An asymptotic approximation for the birthday problem.
\newblock {\em Crux Math.}, 26:151--155, 2000.

\bibitem{Arkhipov:aa}
Alex Arkhipov and Greg Kuperberg.
\newblock The bosonic birthday paradox.
\newblock {\em arXiv}, arXiv:1106.0849:3, 2011.

\bibitem{Arratia:aa}
R.~Arratia, S.~Garibaldi, and J.~Kilian.
\newblock Asymptotic distribution for the birthday problem with multiple
  coincidences, via an embedding of the collision process.
\newblock {\em Random Structures and Algorithms}, 48(3):480--502, 2016.

\bibitem{Benjamini:aa}
Itai Benjamini and Ben Morris.
\newblock The birthday problem and markov chain monte carlo.
\newblock {\em arXiv}, arXiv:math/0701390:7, 2007.

\bibitem{Bhattacharya:aa}
Bhaswar~B. Bhattacharya, Somabha Mukherjee, and Sumit Mukherjee.
\newblock Birthday paradox, monochromatic subgraphs, and the second moment
  phenomenon.
\newblock {\em arXiv}, arXiv:1711.01465:28, 2017.

\bibitem{Brink_2012}
David Brink.
\newblock A (probably) exact solution to the birthday problem.
\newblock {\em The Ramanujan Journal}, 28(2):223--238, Apr 2012.

\bibitem{DASGUPTA2005377}
Anirban DasGupta.
\newblock The matching, birthday and the strong birthday problem: a
  contemporary review.
\newblock {\em Journal of Statistical Planning and Inference}, 130(1):377 --
  389, 2005.
\newblock Herman Chernoff: Eightieth Birthday Felicitation Volume.

\bibitem{Fadnavis:aa}
Sukhada Fadnavis.
\newblock A generalization of the birthday problem and the chromatic
  polynomial.
\newblock {\em arXiv}, arXiv:1105.0698v3:17, 2015.

\bibitem{HENZE1998333}
Norbert Henze.
\newblock A poisson limit law for a generalized birthday problem.
\newblock {\em Statistics and Probability Letters}, 39(4):333 -- 336, 1998.

\bibitem{Joux_2004}
Antoine Joux.
\newblock Multicollisions in iterated hash functions. application to cascaded
  constructions.
\newblock {\em Lecture Notes in Computer Science}, pages 306--316, 2004.

\bibitem{Kim:aa}
Jeong~Han Kim, Ravi Montenegro, Yuval Peres, and Prasad Tetali.
\newblock A birthday paradox for markov chains with an optimal bound for
  collision in the pollard rho algorithm for discrete logarithm.
\newblock {\em Annals of Applied Probability}, 20(2):495--521, 2010.

\bibitem{Mckinney_1966}
E.~H. Mckinney.
\newblock Generalized birthday problem.
\newblock {\em The American Mathematical Monthly}, 73(4):385, Apr 1966.

\bibitem{Montenegro:2012aa}
Ravi Montenegro and Ravi Montenegro.
\newblock A simple method for precisely determining complexity of many birthday
  attacks.
\newblock 2012.

\bibitem{Rivest_1997}
Ronald~L. Rivest and Adi Shamir.
\newblock Payword and micromint: Two simple micropayment schemes.
\newblock {\em Lecture Notes in Computer Science}, pages 69--87, 1997.

\bibitem{Soundararajan:ab}
K.~Soundararajan.
\newblock Integral factorial ratios.
\newblock {\em arXiv}, arXiv:1901.05133:31, 2019.

\bibitem{Soundararajan:aa}
K.~Soundararajan.
\newblock Integral factorial ratios: Irreducible examples with height larger
  than 1.
\newblock {\em arXiv}, arXiv:1906.06413:16, 2019.

\bibitem{Su:aa}
Shenghui Su, Tao Xie, and Shuwang Lu.
\newblock A new non-mds hash function resisting birthday attack and
  meet-in-the-middle attack.
\newblock {\em Theoretical Computer Science}, 654:128--142, Nov 2016.

\bibitem{Suzuki_2006}
Kazuhiro Suzuki, Dongvu Tonien, Kaoru Kurosawa, and Koji Toyota.
\newblock Birthday paradox for multi-collisions.
\newblock {\em Lecture Notes in Computer Science}, pages 29--40, 2006.

\end{thebibliography}
\end{small}

\end{document}